\DeclareFontFamily{OT1}{pzc}{}
\DeclareFontShape{OT1}{pzc}{m}{it}{<-> s * [1.10] pzcmi7t}{}
\DeclareMathAlphabet{\mathpzc}{OT1}{pzc}{m}{it}
\theoremstyle{plain}
\newtheorem{thm}{Theorem}[section]
\newtheorem{lem}[thm]{Lemma}
\newtheorem{case}{Case}
\newtheorem{prop}[thm]{Proposition}
\begin{document} 

\pagenumbering{arabic} \setcounter{page}{1}

\title{On the sum of dilations of a set}

\author{Antal Balog\thanks{The first author's research was supported by the Hungarian National Science Foundation Grants K81658 and K104183.} \\ 
Alfr\'ed R\'enyi Institute of Mathematics \\
Budapest, P.O.Box 127, 1364--Hungary \\
balog@renyi.mta.hu \\
\and
George Shakan\\
University of Wyoming Department of Mathematics \\
Laramie, Wyoming 82072, USA \\
gshakan@uwyo.edu}

\maketitle


\begin{abstract}
We show that for any relatively prime integers $1\leq p<q$ and for any finite $A \subset \mathbb{Z}$ one has $$|p \cdot A + q \cdot A | \geq (p + q) |A| - (pq)^{(p+q-3)(p+q) + 1}.$$
\end{abstract}

\section{Introduction}

Let $A$ and $B$ be finite sets of real numbers. The sumset and the product set of $A$ and $B$ are defined by $$A + B = \{a + b : a \in A, \ b \in B \},$$
$$A \cdot B = \{ab : a \in A, \ b \in B\}.$$ For $d > 0$ the dilation of $A$ by $d$ is defined by 
$$d \cdot A = \{d\} \cdot A =  \{da : a \in A\},$$ while for any real number $x$, the translation of $A$ by $x$ is defined by 
$$x + A = \{x\} + A = \{x + a : a \in A\}.$$ 

The Erd\H os--Szemer\'edi sum--product conjecture \cite{Er} claims that for any finite subset of the positive integers, either the sumset $A+A$ or the product set $A\cdot A$ must be big, more precisely
$$\max\{|A+A|,|A\cdot A|\}\gg |A|^{2-\epsilon}$$ 
for any $\epsilon>0$. The best result in this direction, due to Solymosi \cite{So}, is the above bound with the weaker exponent $4/3-\epsilon$. Another realization of this phenomenon is if an expression of sets uses both addition and multiplication, then it produces a big set. For example, 
$$|A\cdot A + A|\gg |A|^{3/2},\quad |(A+A) \cdot A|\gg |A|^{3/2}$$ 
both come from the method of Elekes \cite{El} using the Szemer\'edi--Trotter incidence geometry, see the book of Tao and Vu \cite{Ta}. Improving the exponent in these bounds is a challenging problem, and $2-\epsilon$ is certainly expected. Such a problem seems easier for more variables, for example
$$|A\cdot A + A\cdot A + A\cdot A + A\cdot A |\geq\frac12  |A|^2$$
is proved by the first author \cite{Ba}. Changing the role of addition and multiplication in most of these expressions does not change the results or our expectation dramatically. 

However, the two variable expressions $A\cdot(1+A)$ and $p \cdot A+q\cdot A$ are exceptions, because translation seriously alters multiplicative behavior, while dilation seems rather harmless. It is a beautiful consequence of the incidence geometry that $A\cdot(1+A)$ is big; for example, Jones and Roche--Newton \cite{Jo} proved 
$$|A\cdot (1+A)|\gg|A|^{24/19-\epsilon}.$$
On the other hand, let $q > p \geq 1$ be relatively prime integers and for $X=\{1,2,\dots,|X|\}$, obviously $p\cdot X + q\cdot X\subset \{p+q,\dots, (p+q)|X|\}$, that is 
\begin{equation}|p\cdot X+q\cdot X|\leq (p+q)|X|-(p+q-1).\label{triviupper}\end{equation} 

Bukh \cite{Bu} proved that for coprime integers $\lambda_1 , \ldots , \lambda_k$ one has $$|\lambda_1 \cdot A + \ldots + \lambda_k \cdot A| \geq (\lambda_1 + \ldots + \lambda_k) |A| - o(|A|).$$ Our main result says $|p \cdot A + q \cdot A| \geq (p+q)|A| - C_{p,q}$. Cilleruelo, Hamidoune and Serra showed this for $p=1$ and $q$ prime and these results were extended by Du, Cao, and Sun \cite{Du} when $q$ is a prime power or the product of two primes. Hamidoune and J. Ru\'{e} \cite{Ha} solved the case when $p=2$ and $q$ prime and these results were extended by Ljujic \cite{Lj} to $p=2$ and $q$ the power of an odd prime or the product of two odd primes. While it is certainly feasible that $|\lambda_1 \cdot A + \ldots + \lambda_k \cdot A| \geq (\lambda_1 + \ldots + \lambda_k) |A| - C$ for some $C$ that only depends on $\lambda_1, \ldots , \lambda_k$, our method seems to only handle the case where $k \leq 2$. 

We will confine ourselves to subsets of the integers. Transforming our result to the case when $p,\, q$ and the set $A$ are from rationals is an obvious task. As Sean Eberhard pointed out, keeping $p$ and $q$ integers and extending our result to where $A$ is a subset of the real numbers follows from, say, Lemma 5.25 in \cite{Ta}. Our main result is as follows. 

\begin{thm} \label{main}  For any relatively prime integers $1\leq p<q$ and for any finite $A \subset \mathbb{Z}$ one has 
$$|p \cdot A + q \cdot A | \geq (p + q) |A| - (pq)^{(p+q-3)(p+q) + 1}.$$
\end{thm}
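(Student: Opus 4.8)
The plan is to analyze $p\cdot A+q\cdot A$ through its decomposition into residue classes. First I would normalize: translating $A$ and dividing it by $\gcd(A-A)$ changes neither $|A|$ nor $|p\cdot A+q\cdot A|$, so I may assume $\gcd(A-A)=1$; and I may assume $|A|$ exceeds the claimed constant, since otherwise the stated bound is vacuous. For $\alpha\in\mathbb Z/q\mathbb Z$ put $A_\alpha=\{a\in A:a\equiv\alpha\}$ and $\hat A_\alpha=\{(a-\alpha)/q:a\in A_\alpha\}$, and similarly $B_\beta,\hat B_\beta$ for $\beta\in\mathbb Z/p\mathbb Z$; let $P$, $Q$ be the occupied residues and $s=|P|$, $t=|Q|$. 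Since $\gcd(p,q)=1$, the Chinese Remainder Theorem makes $p\cdot A+q\cdot A$ a disjoint union over $P\times Q$, yielding the identities
$$|p\cdot A+q\cdot A|=\sum_{\alpha\in P}|p\cdot\hat A_\alpha+A|=\sum_{(\alpha,\beta)\in P\times Q}|\hat A_\alpha+\hat B_\beta|\ \ge\ (s+t)|A|-st .$$
In particular, if $A$ meets every class mod $p$ and mod $q$ (so $s=q$, $t=p$) we are done with constant $pq\le (pq)^{(p+q-3)(p+q)+1}$. The whole difficulty is the case $s<q$ (symmetrically $t<p$).

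Here I would use two extra inputs about the pieces. The first is a self-embedding: from $A_\alpha\subseteq A$ and $A_\alpha=\alpha+q\hat A_\alpha$ one gets $p\cdot\hat A_\alpha+A\supseteq\alpha+(p\cdot\hat A_\alpha+q\cdot\hat A_\alpha)$, hence $|p\cdot\hat A_\alpha+A|\ge|p\cdot\hat A_\alpha+q\cdot\hat A_\alpha|$; since $\gcd(A-A)=1$ forces $s\ge2$ and therefore $|\hat A_\alpha|<|A|$, the inductive hypothesis applies to the right side (and dually for the $\hat B_\beta$). The second is a Freiman-type stability fact: since a set of $n$ integers meeting only $s<q$ residue classes mod $q$ must span at least $\lceil(q/s)\,n\rceil$ consecutive integers, our hypothesis $s<q$ forces $A$ to span roughly $(q/s)|A|$ integers; feeding this spread into the $3k-4$ theorem for the pair $(p\cdot\hat A_\alpha,A)$ shows that either $|p\cdot\hat A_\alpha+A|$ already exceeds $|\hat A_\alpha|+|A|-1$ by an amount of order $\big((q-s)/s\big)|A|$ (the long progression $p\cdot\hat A_\alpha$ must fill many holes of the spread-out $A$), or $\hat A_\alpha$ and $A$ lie in a common short arithmetic progression — which, because $\gcd(A-A)=1$, means a common interval of length $<(q/s)|A|$, impossible for $s<q$.

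The theorem then follows by induction — the shape of the exponent, together with the classical base case $|A+2\cdot A|\ge 3|A|-2$, points to $p+q$ as the natural induction parameter — where for each occupied $\alpha$ one uses the larger of the self-embedding bound $|p\cdot\hat A_\alpha+A|\ge(p+q)|\hat A_\alpha|-C'$ ($C'$ the constant for strictly smaller data) and the Freiman-boosted bound $|p\cdot\hat A_\alpha+A|\ge|A|+|\hat A_\alpha|-1+\min\big(|\hat A_\alpha|-1,\lceil(q-s)|A|/s\rceil\big)$, and then sums over $\alpha$. The main obstacle is the ensuing optimization over how the mass of $A$ is distributed among the $s<q$ occupied classes: on the large pieces one must pay the $-C'$ of the self-embedding bound, and this has to be absorbed by the Freiman surplus accumulated on the small pieces, but in the borderline regime where the mass is spread fairly evenly over many (yet fewer than $q$) classes neither estimate alone suffices, and one must iterate the decomposition — each iteration dividing the diameter by a factor at least $q$ and again invoking the easy case or the stability dichotomy — until a trivial regime is reached. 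Controlling this iteration, and in particular verifying that the constant it produces is exactly $(pq)^{(p+q-3)(p+q)+1}$ and not something worse, is the technical heart of the argument.
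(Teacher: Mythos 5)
Your opening moves match the paper: the normalization so that no common modulus collapses the occupied residue classes, the CRT decomposition of $p\cdot A+q\cdot A$ into disjoint blocks indexed by the occupied residues mod $p$ and mod $q$, the resulting bound $(s+t)|A|-st$, and the observation that the fully distributed case is immediate. But the core of the argument is missing, and you say so yourself: how to avoid paying the additive constant once per occupied class is exactly what you defer to an unspecified ``iterate the decomposition'' step, and nothing in the sketch shows this iteration closes with a constant depending only on $p$ and $q$. Concretely, your Freiman-boosted bound yields a surplus of only $\min(|\hat A_\alpha|-1,\lceil(q-s)|A|/s\rceil)$ per class, so when $s$ is small (say $s=2$ while $p+q$ is large) summing it over the occupied classes gives only about $(s+2)|A|$, far short of $(p+q)|A|$; you must then fall back on the recursive bound $(p+q)|\hat A_\alpha|-C'$ on most classes, and if that recursion is on $|A|$ with the same constant the losses add to $sC'>C$, while your stated induction parameter $p+q$ cannot work at all, since the theorem for smaller $p+q$ concerns different dilates and says nothing about $p\cdot A+q\cdot A$. (The $3k-4$ input also needs more care than you give it --- its hypotheses cap the extractable surplus by the smaller set, and the common-progression conclusion must be combined with $\gcd(A-A)=1$ and the diameter bound precisely --- but that part looks repairable; the constant bookkeeping is not.)

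The paper closes this gap by a different, purely elementary device that your sketch does not contain: instead of proving $(p+q)|A|-C$ by induction on $|A|$, it proves the family of bounds $|p\cdot A+q\cdot A|\ge\frac{m}{p+q}|A|-(pq)^{m+1-3(p+q)}$ for all $3(p+q)\le m\le (p+q)^2$, by induction on $m$; that is, it bootstraps the multiplicative constant from $3$ (Lemma \ref{base}) up to $p+q$ in increments of $\frac{1}{p+q}$, multiplying the additive constant by $pq$ at each step. Each step is a three-way case analysis: some class $P_i$ or $Q_j$ has at most $\frac{|A|}{p+q}$ elements (split it off and apply the weaker bound to the complement); some $P_i'$ or $Q_j'$ is not fully distributed, in which case Lemma \ref{dist1} --- a pigeonhole argument driven by the reducedness conditions \eqref{p}, \eqref{q}, with no Freiman-type theorem --- supplies $\min_k|P_k|>\frac{|A|}{p+q}$ extra elements; or everything is fully distributed, in which case Lemma \ref{dist2} passes to the blocks $A_{ij}=P_i\cap Q_j$ and the trivial bound $(p+q)|A_{ij}|-pq$ suffices because only the coefficient $\frac{m+1}{p+q}\le p+q$ is needed. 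This is the mechanism that keeps the additive constant under control, and it is precisely what your proposal lacks; as written, the proposal is an accurate description of the difficulty rather than a proof of the theorem.
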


Note that the multiplicative constant $p+q$ is best possible as the above example (\ref{triviupper}) shows. On the other hand  we do not attempt to get the best additive constant, and improvements in that respect are very probable. For example, our method gives $q2^{(q-2)(q+1)}$ to the case $p=1$, or simply $q!$ to the case $p=1$, $q$ is a prime. We cannot even decide the true size of the best additive constant. The following example shows that, even in the special case $p=1$, the additive constant is not polynomial in $q$. It also suggests that possibly a better constant can be proved for all sufficiently large sets $A$, avoiding such pathological cases. 


For $t$ a positive integer and $0 \leq a < \frac{q}{2}$ also an integer, let $$A : = \{a_0 + a_1 q + a_2 q^2 + \ldots + a_t q^t : 0 \leq a_i < a , \ a_i \in \mathbb{Z} \}.$$ If we set $a = \lfloor \sqrt{q} \rfloor$ and $t = \lfloor \log_2{\sqrt{q}} \rfloor$, it is easy to see that $|A|=a^{t+1}$ and $|A+q\cdot A|=a^2(2a-1)^t\leq a2^t|A|\leq q|A|$. Thus $|A+q\cdot A|\leq(q+1)|A|-(\sqrt{q}-1)^{\log_2{(\sqrt{q}-1})}$. The authors are thankful to Imre Ruzsa for drawing their attention to the present problem, as well as for pointing out this example. 

\bigskip\bigskip

\section{Preliminaries}\label{q is 3}

For nonempty finite subsets of the real numbers $A = \{a_1 < \ldots < a_r\}$ and $B = \{b_1 < \ldots < b_s\}$, we have that 
\begin{equation}
|A + B| \geq |A| + |B| - 1, 
\label{trivial}
\end{equation} by observing that $$a_1 + b_1 < a_2 + b_1 < \ldots < a_r + b_1 <a_r + b_2 < \ldots < a_r + b_s.$$ We extend this argument in the following lemma.

\begin{lem}\label{base} Let $A$ be a nonempty subset of the real numbers and $q > p \geq 1$. Then $$|p \cdot A + q \cdot A| \geq 3|A| - 2.$$ \end{lem}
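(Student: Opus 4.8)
The plan is to prove the bound by induction on $|A|$, extending the chain argument used for \eqref{trivial}. When $|A|=1$ the statement reads $1\ge 1$. For the inductive step, write $A=\{a_1<a_2<\dots<a_r\}$ with $r\ge 2$ and set $A'=A\setminus\{a_1\}=\{a_2<\dots<a_r\}$, so $|A'|=r-1$. Since $p\cdot A'+q\cdot A'\subseteq p\cdot A+q\cdot A$, the inductive hypothesis gives
$$|p\cdot A+q\cdot A|\ge |p\cdot A'+q\cdot A'|\ge 3(r-1)-2=3r-5,$$
so it suffices to locate three further elements of $p\cdot A+q\cdot A$ that do not lie in $p\cdot A'+q\cdot A'$.

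To that end, note that every element of $p\cdot A'+q\cdot A'$ is at least $(p+q)a_2$, whereas the three numbers
$$(p+q)a_1,\qquad p a_2+q a_1,\qquad p a_1+q a_2$$
all belong to $p\cdot A+q\cdot A$ and are strictly smaller than $(p+q)a_2$: indeed $a_1<a_2$ and $p>0$ give $(p+q)a_1<(p+q)a_2$, $pa_2+qa_1<pa_2+qa_2=(p+q)a_2$, and $pa_1+qa_2<pa_2+qa_2=(p+q)a_2$. It remains to check that these three numbers are pairwise distinct: $(p+q)a_1<pa_2+qa_1$ and $(p+q)a_1<pa_1+qa_2$ because $a_1<a_2$, while $pa_2+qa_1<pa_1+qa_2$ because $(pa_1+qa_2)-(pa_2+qa_1)=(q-p)(a_2-a_1)>0$. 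Adjoining these three to the $3r-5$ elements supplied by the inductive hypothesis yields $|p\cdot A+q\cdot A|\ge 3r-2$, completing the induction.

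There is no real obstacle here; the only point requiring care is exactly the last inequality, which shows the three new elements are distinct from one another, and it is precisely this step that uses $p<q$ (when $p=q$ one has $pa_2+qa_1=pa_1+qa_2$ and the argument degrades to the trivial bound $2|A|-1$). Equivalently, one could give a direct proof by the chain method: the $r$ numbers $(p+q)a_i$ together with, inside each open interval $\bigl((p+q)a_j,(p+q)a_{j+1}\bigr)$, the two numbers $pa_{j+1}+qa_j<pa_j+qa_{j+1}$, constitute $r+2(r-1)=3r-2$ distinct elements of $p\cdot A+q\cdot A$.
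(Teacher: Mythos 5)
Your proof is correct, and it is essentially the same argument as the paper's: the paper directly exhibits the chain $pa_i+qa_i<pa_{i+1}+qa_i<pa_i+qa_{i+1}<\dots$, which is exactly the direct version you give in your closing remark, while your induction on $|A|$ merely repackages the same count (two new elements strictly between consecutive values $(p+q)a_i$, using $q>p$ for their distinctness). No gaps.
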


\begin{proof} Let $A = \{a_1, \ldots , a_n\}$ where $a_1 < a_2 < \ldots < a_n$. Then $$p a_1 + q a_1 <p a_2 + q a_1 < p a_1 + q a_2 < p a_2 + q a_2 < p a_3 + q a_2 < \ldots < p a_{n-1} + q a_n < p a_n + q a_n.$$ For each $1 \leq i \leq n-1$ there are three elements in the previous list, $p a_i + q a_i < p a_{i+1} + q a_i < p a_i + q a_{i+1}$, and one more element $p a_n + q a_n$. So $|p\cdot A + q \cdot A| \geq 3(n - 1) + 1 = 3 |A| -2 $. 

\end{proof}

It is worth noting that this gives $|A + 2 \cdot A| \geq 3 |A| - 2$, which settles Theorem \ref{main} in the case $p=1,\,q=2$. This is best possible by (\ref{triviupper}).

The main purpose of this section is to give a short proof of $|A + 3 \cdot A| \geq 4|A| - 4$ in order to introduce some of the main ideas of the proof of the general theorem. This was proved by Cilleruelo, Silva, and Vinuesa \cite{Si} using different methods and they were able to classify the sets where equality holds. The method we use requires that $A$ is a subset of the integers, and can be extended to a general lower bound for all cases with $q\geq 3$, though we leave this extension to the interested reader. The bound $4|A|-4$ is also best possible, as is follows from the next construction. Let $0\leq d < q\leq n$ be  integers and let $X := \{i + xq : 0\leq i\leq d,\, 0 \leq x <n\}$. Note that $|X|=(d+1)n$. It is easy to check that $X + q \cdot X$ is precisely the set of integers in the interval $[0, \ldots , (q+1)(d + (n-1)q)]$ that are equivalent to one of $\{0 , \ldots , d\}$ modulo $q$. It follows that $|X + q \cdot X| = (q+1)|X| - (d+1) (q - d)$, and the optimal choice $d=\lfloor (q-1)/2\rfloor$ gives $|X+q\cdot X| =(q+1)|X| - \lfloor \frac{q+1}{2} \rfloor \lceil \frac{q+1}{2} \rceil$. 

We would like to remark here one can use \ref{main} and the methods of \cite{Ci} to show for $|A| \geq (q-1)^2 q^{q^2 - q+1}$, one has $|A + q \cdot A| \geq (q+1)|A| -  \lfloor \frac{q+1}{2} \rfloor \lceil \frac{q+1}{2} \rceil$. Furthermore, their methods can be used to show $X$  is the unique set, up to an affine transformation, where equality holds.

\begin{thm}\label{q=3} Let $A$ be a finite subset of the integers. Then $|A + 3 \cdot A| \geq 4 |A| - 4$.
\end{thm}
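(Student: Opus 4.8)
The plan is to refine the interval/chain argument of Lemma \ref{base} by exploiting that $A$ is a set of \emph{integers} and that $p=1$, $q=3$ are coprime. Write $A=\{a_1<a_2<\cdots<a_n\}$. The sumset $A+3\cdot A$ contains the long chain
$$a_1+3a_1<a_2+3a_1<a_1+3a_2<a_2+3a_2<a_3+3a_2<\cdots<a_{n-1}+3a_n<a_n+3a_n,$$
which already yields $3n-2$ elements; I need to find $n-2$ further elements. The idea is that between consecutive ``diagonal'' blocks the chain only used $a_i+3a_i<a_{i+1}+3a_i<a_i+3a_{i+1}$, i.e. it skipped over the possibility of an element strictly between $a_{i+1}+3a_i$ and $a_i+3a_{i+1}$, or between $a_i+3a_{i+1}$ and $a_{i+1}+3a_{i+1}$. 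Since $(a_i+3a_{i+1})-(a_{i+1}+3a_i)=2(a_{i+1}-a_i)\ge 2$ when $a_{i+1}-a_i\ge 1$, and more generally the relevant gaps are at least as large as twice a consecutive difference, there is room; the task is to produce an element of $A+3\cdot A$ landing in one of these gaps for all but boundedly many indices $i$.

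Concretely, I would look at elements of the form $a_j+3a_k$ with $k<j$ or $j<k$ chosen cleverly relative to the local gaps, and show that for each $i$ with $2\le i\le n-1$ one can insert a new sum into the ``window'' around index $i$. A clean way to organize this: for each $i$, consider the two candidate elements $a_i+3a_{i+1}$ (already counted) and compare the two differences $d_i=a_{i+1}-a_i$; using coprimality of $1$ and $3$, the residues $a_j+3a_k \bmod 1$ give nothing, so instead I would pass to a different device — sort the $2n-2$ ``short'' sums $a_i+3a_{i-1}$ and $a_i+3a_{i+1}$ together with the $n$ diagonal sums $a_i+3a_i$, and argue that collisions among these $3n-2$ listed values are limited: a collision $a_i+3a_j=a_k+3a_\ell$ forces $a_i-a_k=3(a_\ell-a_j)$, a strong Freiman-type constraint that (after a pigeonhole / extremal-index analysis) can occur for at most a bounded number — here, at most $4$ — ``deficient'' positions. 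Counting the listed elements with multiplicity and subtracting the bounded defect gives $4n-4$.

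The main obstacle I expect is the bookkeeping at the two ends and the precise accounting of how many indices can fail to contribute a fresh element: one must handle the boundary terms $i=1$ and $i=n$ and the case of large jumps $d_i$ (where a whole arithmetic-progression-worth of new sums appears, more than compensating) separately from the generic case, and then verify the total loss never exceeds $4$. I would first nail the generic insertion step (one new element per interior index, via the inequality $2d_i\ge 2$ together with an explicit witness such as $a_i+3a_i+? $ or a sum $a_j+3a_k$ with $|j-k|$ controlled), then treat the finitely many exceptional configurations by hand, and finally assemble the count. This ``one extra element per gap, up to $O(1)$ exceptions'' template is exactly the mechanism that will later generalize, with $pq$-adic digit considerations replacing the elementary gap estimate, to the proof of Theorem \ref{main}.
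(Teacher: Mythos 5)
Your proposal is a plan rather than a proof: the two load-bearing steps --- the ``generic insertion'' of one new element of $A+3\cdot A$ into the window around each interior index, and the claim that collisions $a_i+3a_j=a_k+3a_\ell$ create at most $4$ deficient positions --- are never established, and the first is false as stated. Take the extremal set $A=\{0,1,3,4,6,7,\dots,3k,3k+1\}$ (the construction preceding the theorem, with $d=1$), where $|A+3\cdot A|=4|A|-4$ exactly. For every consecutive pair with $a_{i+1}-a_i=1$ (so $a_i=3j$), the window between $a_{i+1}+3a_i=12j+1$ and $a_i+3a_{i+1}=12j+3$ contains only the integer $12j+2\equiv 2\pmod 3$, which is not in $A+3\cdot A$ since $A+3\cdot A$ meets only the classes $0,1$ mod $3$; so about half of the interior indices contribute nothing new, and the deficit is made up by the other windows, which contribute two elements each. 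Already for $A=\{0,1,3,4\}$ the chain of Lemma \ref{base} gives $\{0,1,3,4,6,10,12,13,15,16\}$ and the only window containing extra elements is $(6,10)$, which contains both $7$ and $9$. Hence ``one extra element per gap, up to $O(1)$ exceptions'' is structurally the wrong bookkeeping: the number of deficient indices can grow linearly with $|A|$, and the observation that a gap has length at least $2$ gives room in $\mathbb{Z}$ but no witness in $A+3\cdot A$. Nothing in your sketch explains how to certify, in general, the compensating elements in the fertile windows, and the proposed Freiman-type collision bound is asserted without argument.

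The paper's proof runs on an entirely different mechanism, which (or something equally substantive) is what you would need to close the gap: after translating and dilating so that $A$ meets at least two residue classes modulo $3$, write $A=A_1\cup A_2$ with $A_j=a_j+3\cdot B_j$, use the disjointness of the sets $A_j+3\cdot A$ to split $|A+3\cdot A|=|A_1+3\cdot A|+|A_2+3\cdot A|$, and induct on $|A|$ via a dichotomy: either $|(A_1+3\cdot A_2)\setminus(A_1+3\cdot A_1)|\geq|A_2|$ (many new elements), or this difference is small, which forces $B_1$ to be fully distributed mod $3$ and hence $|A_1+3\cdot A_1|\geq 4|A_1|-3$; a three-case analysis, with two additional elements supplied by the maximum and minimum of $A$ in the case where both $B_1,B_2$ are fully distributed, then yields $4|A|-4$. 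It is this residue-class decomposition and ``fully distributed'' induction, not a refined chain count, that also scales up to Theorem \ref{main}.
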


\begin{proof}

If $|A| =1$ the result is trivial. If $|A|=2$ the result follows from Lemma \ref{base}. We use induction on the size of $|A|$.  Assume  $|A|>2$, and  for any proper subsets $X$ of $A$, we have $|X + 3 \cdot X| \geq 4 |X| - 4$. 

Translation and dilation do not change $|A + 3 \cdot A|$, we may translate $A$ so that 0 is the smallest element and then dilate $A$ until it intersects at least 2 residue classes modulo 3. Let $$A = \displaystyle \bigcup_{j = 1}^r A_j, \ \ A_j = a_j + 3 \cdot B_j  , \ \ B_j \neq \emptyset, \ \ 0 \leq a_j < 3.$$

This union is disjoint and moreover the sets $A_j+3\cdot A$ are disjoint. Thus from the obvious fact \eqref{trivial} it follows that \begin{equation} \label{FD3} |A+ 3 \cdot A| = \displaystyle \sum_{j = 1}^r |A_j + 3 \cdot A| \geq \displaystyle \sum_{j = 1}^r  \big(|A_j| + |A| - 1\big) = (r+1)|A| - r.\end{equation}

We say that $A$ is fully distributed modulo 3 (FD mod 3) if $A$ intersects all 3 residue classes modulo 3. By \eqref{FD3}, if $A$ is FD mod 3 then $|A + q \cdot A| \geq 4|A| - 3 > 4|A| - 4$ and the theorem follows. 

Thus we may assume that $A$ intersects exactly two residue classes modulo 3, so $A = A_1 \cup A_2$. Then $$A+ 3 \cdot A = (A_1 + 3 \cdot A) \cup (A_2 + 3 \cdot A)$$ where the union is disjoint and $$ |A_1 + 3 \cdot A | = |A_1+ 3 \cdot A_1| + |(  A_1 + 3 \cdot A_2) \setminus (A_1 +  3 \cdot A_1)|,$$ 
$$|A_2 + 3 \cdot A | = |A_2+ 3 \cdot A_2| + |(  A_2  + 3 \cdot A_1) \setminus  (A_2 +  3 \cdot A_2)|.$$ Suppose now that $$|(A_1 + 3 \cdot A_2) \setminus (A_1 + 3 \cdot A_1)| < |A_2|.$$ After a translation by $-a_1$, dilation by $\frac{1}{q}$, and another translation by $-a_1$, we obtain $$|(a_2 - a_1 + B_1 + 3 \cdot B_2) \setminus (B_1 + 3 \cdot B_1)| < |B_2|.$$ Therefore for any $x \in B_1$, there is a $y \in B_2$ such that $a_2-a_1+x+3y\in B_1+3\cdot B_1$ and so there is an $x'\in B_1$ such that $a_2 - a_1 + x \equiv x' \pmod 3$. We may repeat this for $x'$ in place of $x$ so there is an element $x'' \in B_1$ such that $a_2 - a_1 + x' \equiv x'' \pmod 3$. Since $(a_2 - a_1,3)=1$, we have that $x , x' , x''$ are incongruent mod 3. Thus $B_1$ is FD mod 3 and it follows from \eqref{FD3} that $$|A_1 + 3 \cdot A_1| = |B_1 + 3 \cdot B_1| \geq 4 |B_1| - 3 = 4 |A_1| - 3.$$ A symmetric argument shows if $|(A_2 + 3 \cdot A_1) \setminus (A_2 + 3 \cdot A_2)| < |A_1|$ then we have that $B_2$ is FD mod 3 and that $|A_2 + 3 \cdot A_2| \geq 4|A_2| - 3$.

\begin{case}
Suppose that $B_1$ and $B_2$ are both FD mod 3. 
\end{case}
We can always find two more elements in the (disjoint) union of $(A_1 + 3 \cdot A_2) \setminus (A_1 + 3 \cdot A_1)$ and $(A_2 + 3 \cdot A_1) \setminus ( A_2 + 3 \cdot A_2)$. Consider the maximum element of $A$ and let it be in $A_2$ and call it $M$. Let $u \in A_1$ be maximally chosen. Then it is clear that $u + 3M \in (A_1 + 3 \cdot A_2) \setminus (A_1 + 3 A_1)$. A symmetric argument works if $M \in A_1$. Call this new element $z_1$. Similarly let $m \in A$ be minimal and assume $m \in A_1$. Then let $v \in A_2$ be minimally chosen. It is clear that $v + 3 m \in (A_2 + 3 \cdot A_1)  \setminus (A_2 + 3 \cdot A_2)$. A symmetric argument works if $m \in A_2$. Call this new element $z_2$. Since $v+3m\leq M+3m<m+3M\leq u+3M$, then $z_1 \neq z_2$ and we have 
$$\{z_1,z_2 \} \subset (A_1 + 3 \cdot A_2) \setminus (A_1 + 3 \cdot A_1) \cup (A_2 + 3 \cdot A_1) \setminus ( A_2 + 3 \cdot A_2).$$
Then by \eqref{FD3} $$|A+ 3 \cdot A| \geq |A_1 + 3 \cdot A_1| + |A_2 + 3 \cdot A_2| + |\{z_1 , z_2\}| \geq 4 |A_1| - 3 + 4 |A_2| - 3 + 2 = 4|A| - 4.$$ 

\begin{case}
Suppose that $B_1$ and $B_2$ are both not FD mod 3. \end{case} Then $|(A_1 + 3 \cdot A_2) \setminus (A_1 + 3 \cdot A_1)| \geq |A_2|$ and $|(A_2 + 3 \cdot A_1) \setminus (A_2 + 3 \cdot A_2)| \geq |A_1|$ and we have from Lemma \ref{base} that $$|A + 3 \cdot A| = |A_1 + 3 \cdot A| + |A_2 + 3 \cdot A| \geq 3|A_1| - 2 + |A_2| + 3 |A_2| - 2 + |A_1| = 4 |A| - 4.$$

\begin{case} Suppose one of $B_1$ and $B_2$ is FD mod 3 and the other is not. \end{case} We may assume, without loss of generality, that $B_1$ is FD mod 3 while $B_2$ is not FD mod 3. This is the only case when we use the induction hypothesis. Since $B_1$ is FD mod 3, $|A_1| \geq 3$ and since $B_2$ is not FD mod 3, $|(A_2 + 3 \cdot A_1) \setminus (A_2 + 3 \cdot A_2)| \geq |A_1| \geq 3$. By induction and \eqref{FD3}, we have $$|A + 3 \cdot A| = |A_1 + 3 \cdot A| + |A_2 + 3 \cdot A| \geq 4 |A_1| - 3 + 4 |A_2| - 4 + |A_1| \geq 4 |A| - 4.$$

In all cases we obtain $|A + 3 \cdot A| \geq 4|A| - 4$, thus completing the induction.

\end{proof}

\section{Proof of Theorem \ref{main}}

Translation and dilation do not change $|p \cdot A + q \cdot A|$. 

Suppose the residue classes modulo $p$ that intersect $A$ are $p_1 , \ldots, p_r$ and suppose the residue classes modulo $q$ that intersect $A$ are $q_1 , \ldots , q_s$. For $1 \leq i \leq r$ and $1 \leq j \leq s$, consider the greatest common divisors $$d_{p,i} = (p_1 - p_i , p_2 - p_i , \ldots , p_r - p_i , p),$$ $$d_{q,j} = (q_1 - q_j , q_2 - q_j , \ldots , q_s - q_j , q).$$ If there is a $d_{q,j} > 1$, then change $A$ to $(A - q_j) / d_{q,j}$. Similarly if there is a $d_{p,i} > 1$, then change $A$ to $(A - p_i)/d_{p,i}$. Note that such a change redefines the residue classes $p_i,\,q_j$ as well as $r$ and $s$.

Repeat this process as many times as possible. If $|A| \geq 2$ then the process must stop in finitely many steps since each reduction decreases the distance between the minimal and maximal elements of $A$. 

We say that $A$ is reduced if both of the following hold for any $1 \leq i \leq r$ and $1 \leq j \leq s$:

\begin{equation} (p_1 - p_i , p_2 - p_i , \ldots , p_r - p_i , p) = 1 \label{p}, \end{equation}

\begin{equation} (q_1 - q_j , q_2 - q_j , \ldots , q_s - q_j , q) = 1 \label{q}. \end{equation}

Observe that any reduced set must have at least two residue classes modulo $q$ and modulo $p$ when $p > 1$. 


We will assume $A$ satisfies \eqref{p} and \eqref{q}. We split $A$ into residue classes modulo $p$ and modulo $q$ as follows:

$$A = \displaystyle \bigcup_{i = 1}^r P_i , \ \  P_i = p_i + p \cdot P_i^{\prime}, \ \ P_i \neq \emptyset ,\ \  0 \leq p_i < p,$$

$$A = \displaystyle \bigcup_{j = 1}^s Q_j , \ \  Q_j = q_i + q \cdot Q_j^{\prime}, \ \ Q_j \neq \emptyset ,\ \  0 \leq q_j < q.$$

We would like to remark here that the proof is simpler when $p = 1$, since \eqref{p} is vacuous and we do not need to split $A$ into residue classes mod $p$. 

Note that \begin{align*} p \cdot A + q \cdot A &= \displaystyle \bigcup_{i =1}^r \big(p \cdot A + q \cdot P_i\big) = \bigcup_{j=1}^s \big(p \cdot Q_j + q \cdot A\big) \\ &= \displaystyle \bigcup_{i=1}^r \bigcup_{j=1}^s \big(p \cdot Q_j + q \cdot P_i\big)  \end{align*} where the unions are disjoint. Indeed, for any $1 \leq i \leq r$ and $1 \leq j \leq s$, any element of $p \cdot Q_j + q \cdot P_i$ is equivalent to $p q_j \pmod q$ and $q p_i \pmod p$ and since $(p,q) = 1$, the $p \cdot Q_j + q \cdot P_i$ are pairwise disjoint.  This is not true for, say $A + A + q \cdot A$, and the extension of our method to three or more terms does not seem 
straightforward.

 Utilizing \eqref{trivial}, we obtain

\begin{align}
\label{first}
|p \cdot A + q \cdot A|  &= \displaystyle \sum_{i= 1}^r \sum_{j=1}^s |p \cdot Q_j + q \cdot P_i|  \\ \notag
& \geq \displaystyle \sum_{i= 1}^r \sum_{j=1}^s \big(|Q_j| + |P_i| - 1\big)  = (r+s) |A| - rs.
\end{align}

We will say that $A$ is fully distributed modulo $p$ (FD mod $p$) if $A$ intersects every residue class modulo $p$ and $A$ is fully distributed modulo $q$ (FD mod $q$) if $A$ intersects every residue class modulo $q$. Thus $A$ is FD mod $p$ and/or FD mod $q$ if and only if $r=p$ and/or $s=q$. 

\begin{lem} \label{dist1} For any fixed $1 \leq j \leq s$, either $Q_j^{\prime}$ is FD mod $q$ or $$|p \cdot Q_j + q \cdot A| \geq |p \cdot Q_j + q \cdot Q_j| + \min_{1 \leq m \leq s} |Q_m|.$$ Similarly for any fixed $1 \leq i \leq r$, either $P_i^{\prime}$ is FD mod $p$ or $$|p \cdot A + q \cdot P_i| \geq |p \cdot P_i + q \cdot P_i| + \min_{1 \leq k \leq r} |P_k|.$$
\end{lem}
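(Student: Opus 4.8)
The plan is to prove only the first assertion; the second follows by the symmetric roles of $p$ and $q$. Fix $j$ and suppose that $Q_j'$ is \emph{not} FD mod $q$. The point of the displayed inequality is that $p\cdot Q_j+q\cdot A$ contains $p\cdot Q_j+q\cdot Q_j$ together with at least $\min_m|Q_m|$ extra elements coming from some other $Q_m$. So I would begin by writing
$$|p\cdot Q_j+q\cdot A|=|p\cdot Q_j+q\cdot Q_j|+\Bigl|\bigl(p\cdot Q_j+q\cdot(A\setminus Q_j)\bigr)\setminus\bigl(p\cdot Q_j+q\cdot Q_j\bigr)\Bigr|,$$
and the task reduces to bounding that last cardinality below by $\min_m|Q_m|$. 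Equivalently, arguing by contradiction, assume
$$\Bigl|\bigl(p\cdot Q_j+q\cdot(A\setminus Q_j)\bigr)\setminus\bigl(p\cdot Q_j+q\cdot Q_j\bigr)\Bigr|<\min_{1\le m\le s}|Q_m|\le|Q_m|\quad\text{for every }m;$$
in particular this holds with some $Q_m$ with $m\neq j$ (such an $m$ exists since a reduced set meets at least two residue classes mod $q$).

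Next I would translate and rescale to work with $Q_j'$ directly, just as in the proof of Theorem~\ref{q=3}: translating by $-pq_j$ and dilating by $1/q$ turns $p\cdot Q_j+q\cdot Q_m$ into $p\cdot Q_j'+q\cdot Q_m'+ \text{(a fixed shift involving }q_m-q_j)$, and similarly $p\cdot Q_j+q\cdot Q_j$ into $p\cdot Q_j'+q\cdot Q_j'$ up to the same shift. The smallness hypothesis then says that for every $x\in Q_j'$ there is $y\in Q_m'$ with $p x+q y+(q_m-q_j)\in p\cdot Q_j'+q\cdot Q_j'$ after accounting for the shift; reducing mod $q$, this forces $p x+(q_m-q_j)\equiv p x'\pmod q$ for some $x'\in Q_j'$. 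Since $(p,q)=1$ we may cancel $p$ and conclude $x'\equiv x+p^{-1}(q_m-q_j)\pmod q$, i.e.\ the residue of $Q_j'$ mod $q$ is closed under adding the fixed nonzero constant $c_m:=p^{-1}(q_m-q_j)$.

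Now I would iterate over all $m\neq j$: the set of residues occupied by $Q_j'$ mod $q$ is closed under translation by each $c_m$, hence under the subgroup of $\mathbb{Z}/q\mathbb{Z}$ generated by $\{c_m:m\neq j\}$, which is the subgroup generated by $d:=(q_1-q_j,\dots,q_s-q_j,q)\cdot p^{-1}$ — but $p^{-1}$ is a unit, so this is the subgroup of index $d$, and $d=1$ by the reducedness condition \eqref{q}. Therefore $Q_j'$ meets every residue class mod $q$, i.e.\ $Q_j'$ is FD mod $q$, contradicting our assumption. (One must be slightly careful that $Q_j'$ is nonempty, which it is since $Q_j\neq\emptyset$; a nonempty subset closed under the full group $\mathbb{Z}/q\mathbb{Z}$ is everything.) The main obstacle is bookkeeping the three successive affine changes of variable correctly so that the ``$\in p\cdot Q_j'+q\cdot Q_j'$'' membership genuinely yields a congruence of the form $px+c_m\equiv px'\pmod q$ with $x,x'\in Q_j'$; once that congruence is in hand, the group-closure argument and the invocation of \eqref{q} are routine, exactly paralleling the $q=3$ case where only a single generator $a_2-a_1$ was needed.
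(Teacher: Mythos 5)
Your proposal is correct and follows essentially the same route as the paper: decompose $p\cdot Q_j+q\cdot A$, assume the extra part is smaller than $\min_m|Q_m|$, use pigeonhole for each $m\neq j$ to get that the residues of $Q_j'$ mod $q$ are invariant under translation by (a unit multiple of) $q_m-q_j$, and then invoke the reducedness condition \eqref{q} to conclude $Q_j'$ is FD mod $q$. The only cosmetic difference is that you cancel $p$ via $p^{-1}\pmod q$ at the start, whereas the paper first shows $p\cdot Q_j'$ is FD mod $q$ and then uses $(p,q)=1$.
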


\begin{proof} We only prove the first statement and the second statement is a symmetric argument. Suppose $|p \cdot Q_j + q \cdot A| < |p \cdot Q_j + q \cdot Q_j| + \displaystyle \min_{1 \leq m \leq s} |Q_m|.$ Then for any $m \neq j$, we have $$|Q_m^{\prime}| = |Q_m| > |(p \cdot Q_j + q \cdot Q_m )\setminus (p \cdot Q_j + q \cdot Q_j) | = |(q_m - q_j + p \cdot Q_j^{\prime} + q \cdot  Q_m^{\prime}) \setminus (p \cdot Q_j^{\prime} + q \cdot Q_j^{\prime})|.$$

It follows that for every $x \in p \cdot Q_j^{\prime}$ there is a $y \in  Q_m^{\prime}$ such that $q_m-q_j+x+qy\in p\cdot Q'_j + q\cdot Q'_j$, and so there is an $x'\in p\cdot Q'_j$ such that $q_m - q_j + x \equiv x' \pmod q$. We may repeat this argument with $x'$ in place of $x$, and so on, and we may repeat for all $m$ so that eventually we get for any $x \in p \cdot Q_j^{\prime}$ and any $z = u_1 (q_1 - q_j) + \cdots +u_s (q_s - q_j)$, where $u_1 , \ldots , u_s$ are arbitrary integers, that there is an $x' \in p \cdot Q_j^{\prime}$ with $z + x \equiv x' \ (q)$. Since $A$ is reduced, the set of $z$ describes all residues mod $q$ by \eqref{q}, it follows that $p \cdot Q_j^{\prime}$ is FD mod $q$ and since $(p,q) =1$, $Q_j^{\prime}$ is FD mod $q$. 
\end{proof}

The previous lemma will be useful in finding new elements if any of the $P_i^{\prime}$ are not FD mod $p$ or if any of the $Q_j^{\prime}$ are not FD mod $q$. For convenience, set $A_{ij} := P_i \cap Q_j$, where some of these sets may be empty. Then 

$$A = \displaystyle \bigcup_{i = 1}^r \bigcup_{j=1}^s A_{ij}, \ \  A_{ij} = a_{ij} + pq \cdot A_{ij}^{\prime}, \ \  0 \leq a_{ij} <  pq , \ \ a_{ij} \equiv p_i (p) ,  \ \ a_{ij} \equiv q_j (q).$$

The fact that we may write $A_{ij} = a_{ij} + pq \cdot A_{ij}^{\prime}$ is precisely the Chinese remainder theorem. We have that \begin{equation} |A| = \displaystyle \sum_{i = 1}^r \sum_{j=1}^s |A_{ij}|, \label{sum} \end{equation} where some of the summands are possibly zero. 

\begin{lem}\label{dist2}

Fix $1 \leq i \leq r$ and $1 \leq j \leq s$. Suppose $P_i^{\prime}$ is FD mod $p$. Then either $A_{ij}^{\prime}$ is FD mod $p$ or $$|p \cdot Q_j + q \cdot P_i| \geq |p \cdot A_{ij} + q \cdot A_{ij}| + |A_{ij}|.$$ Similarly, suppose $Q_j^{\prime}$ is FD mod $q$. Then either $A_{ij}^{\prime}$ is FD mod $q$ or $$|p \cdot Q_j + q \cdot P_i| \geq |p \cdot A_{ij} + q \cdot A_{ij}| + |A_{ij}|.$$

\end{lem}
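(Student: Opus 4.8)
The plan is to mimic the mechanism of Lemma \ref{dist1}, but now working one dilation level deeper inside the block $A_{ij}=P_i\cap Q_j$, and using the hypothesis that $P_i'$ (resp.\ $Q_j'$) is already FD mod $p$ (resp.\ mod $q$) to supply enough residue shifts. I will only treat the first statement; the second is the obvious symmetric argument with the roles of $p$ and $q$ interchanged. First I would unwind the decomposition $P_i = p_i + p\cdot P_i'$ and, inside it, the finer decomposition $A_{ij} = a_{ij} + pq\cdot A_{ij}'$; note that $p\cdot Q_j + q\cdot P_i \supseteq \bigcup_{k} (p\cdot Q_j + q\cdot (P_i\cap Q_k))$, and in particular it contains both $p\cdot A_{ij}+q\cdot A_{ij}$ and, for every residue class $p_m$ mod $p$ hit by $P_i'$, the set $p\cdot A_{ij} + q\cdot(P_i\cap Q_j + p\cdot(\text{shift}))$; the point is to count how much of the latter escapes $p\cdot A_{ij}+q\cdot A_{ij}$.

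The key step is a contrapositive pigeonhole argument exactly parallel to the proof of Lemma \ref{dist1}. Suppose $|p\cdot Q_j + q\cdot P_i| < |p\cdot A_{ij}+q\cdot A_{ij}| + |A_{ij}|$. Because $P_i = p_i + p\cdot P_i'$ and $P_i'$ is FD mod $p$, for each residue $c$ mod $p$ there is an element of $P_i'$ congruent to $c$, hence (after peeling off $p_i$ and dilating by $q$ as in Lemma \ref{dist1}) the set $p\cdot Q_j + q\cdot P_i$ contains, for a suitable integer shift $t\equiv c\pmod p$, a translate $t + p\cdot A_{ij} + q\cdot A_{ij}$ — more precisely its intersection with the appropriate $Q$-class. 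Since by assumption fewer than $|A_{ij}|$ new elements can be produced, for every $x \in q\cdot A_{ij}'$ there is a $y\in A_{ij}'$ and an $x'\in q\cdot A_{ij}'$ with $(\text{shift}) + x \equiv x' \pmod p$, where the shift ranges over all multiples (with arbitrary integer coefficients) of the differences of residues mod $p$ that $P_i'$ realizes. Because $P_i'$ is FD mod $p$, these shifts run over \emph{all} residues mod $p$, so iterating gives that $q\cdot A_{ij}'$ — and hence, since $(p,q)=1$, $A_{ij}'$ itself — is FD mod $p$. This is the desired dichotomy.

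I expect the main obstacle to be purely bookkeeping: keeping track of which dilation and translation has been applied at which level (the chain $A \to P_i \to A_{ij} \to A_{ij}'$ involves a translation by $p_i$ and dilation by $p$, then a further translation by $q_j$ and dilation by $q$), and making sure that the ``new elements'' one extracts genuinely lie in $(p\cdot Q_j + q\cdot P_i)\setminus(p\cdot A_{ij}+q\cdot A_{ij})$ and are distinct — i.e.\ that the different residue-class shifts coming from $P_i'$ produce disjoint contributions. This disjointness is the analogue of the disjointness of the $p\cdot Q_j + q\cdot P_i$ used in \eqref{first}: here the relevant elements lie in distinct residue classes modulo $p$ (coming from the FD property of $P_i'$) intersected with a fixed class modulo $q$, so they are automatically distinct, and one does not even need to chase maximal/minimal elements as in the proof of Theorem \ref{q=3}. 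Once that is set up, the iteration and the appeal to \eqref{q}-type generation of all residues closes the argument.
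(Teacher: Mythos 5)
Your plan is essentially the paper's own argument: assume the inequality fails, so fewer than $|A_{ij}|$ elements of $p \cdot Q_j + q \cdot P_i$ (in fact already of $p \cdot A_{ij} + q \cdot P_i$) escape $p \cdot A_{ij} + q \cdot A_{ij}$, normalize by the affine map (subtract $pa_{ij}+qa_{ij}$, divide by $pq$), and pigeonhole using the FD-ness of $P_i^{\prime}$ to conclude that $q \cdot A_{ij}^{\prime}$, hence $A_{ij}^{\prime}$ since $(p,q)=1$, is FD mod $p$; this is correct in substance. Two small points of bookkeeping to fix when writing it up: because $P_i^{\prime}$ is already FD mod $p$, no Lemma \ref{dist1}-style iteration over generated shifts is needed (a single pigeonhole step, fixing an element of $P_i^{\prime}$ in a prescribed class and varying $y \in A_{ij}^{\prime}$, suffices), and the residue-class distinctness must be read at the normalized level (after dividing by $pq$), since in the original scale all of $p \cdot Q_j + q \cdot P_i$ lies in one class mod $p$ and it contains only the translates $qu + p \cdot A_{ij}$ with $u \in P_i$, not full translates $t + p \cdot A_{ij} + q \cdot A_{ij}$ as stated in your sketch.
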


\begin{proof} We prove the first statement, and the second statement follows by a symmetric argument. The result is trivial for $A_{ij} = \emptyset$. For $A_{ij} \neq \emptyset$, we analyze 
$$|p \cdot A_{ij} + q \cdot P_i |= |p \cdot A_{ij} + q \cdot A_{ij} |+ |(p \cdot A_{ij} + q \cdot P_i) \setminus (p \cdot A_{ij} + q \cdot A_{ij} )|.$$ An easy calculation reveals that 
$$|(p \cdot A_{ij} + q \cdot P_i) \setminus (p \cdot A_{ij} + q \cdot A_{ij} )| = |(\frac{p_i - a_{ij}}{p} + p \cdot A_{ij}^{\prime} + P_i^{\prime} ) \setminus (p \cdot A_{ij}^{\prime} + q \cdot A_{ij}^{\prime})|.$$ 
Note that $\frac{p_i - a_{ij}}{p} \in \mathbb{Z}$.
Suppose this is smaller than $|A_{ij}| = |A_{ij}^{\prime}|$. Then for every $x \in P_i^{\prime}$ there is a $y \in A_{ij}^{\prime}$ such that $\frac{p_i - a_{ij}}{p} + p y + x \in p \cdot A_{ij}^{\prime} + q \cdot A_{ij}^{\prime}$. This means, for every  $x \in P_i^{\prime}$ there is  an $x'\in q\cdot A'_{ij}$ such that $\frac{p_i - a_{ij}}{p} + x \equiv x' \pmod p$ . Since $P_i^{\prime}$ is FD mod $p$, we have that $q \cdot A_{ij}^{\prime}$ is FD mod $p$. But $(p,q) = 1$, so $A_{ij}^{\prime}$ is FD mod $p$.

\end{proof}

We are now ready to prove Theorem \ref{main}. Our strategy is simple: we start from Lemma \ref{base} and gradually improve it in an iterative way.

\begin{prop} \label{proof} Let $q > p \geq 1$ are relatively prime integers. For every integer $3(p + q) \leq m \leq (p+q)^2$ and for all finite sets of integers $A$, we have $$|p \cdot A + q \cdot A| \geq \frac{m}{p+q} |A| - (pq)^{m + 1 - 3(p+q)}.$$
\end{prop}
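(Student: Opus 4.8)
The plan is to induct on $|A|$, and within the inductive step to induct on $m$ (equivalently, bootstrap from a weaker multiplicative constant to a stronger one), exactly mirroring the structure of the proof of Theorem~\ref{q=3}. The base case $m = 3(p+q)$ says $|p\cdot A + q\cdot A| \ge 3|A| - 1$, which is weaker than Lemma~\ref{base}, so it holds with nothing to prove. For the inductive step, fix $m$ with $3(p+q) < m \le (p+q)^2$ and assume the bound with $m-1$ holds for all finite integer sets, and that the bound with $m$ holds for all proper subsets of $A$. Since translation and dilation preserve $|p\cdot A + q\cdot A|$, I may assume $A$ is reduced, i.e.\ satisfies \eqref{p} and \eqref{q}; note the additive constant only grows under the reduction process, so proving the reduced case suffices. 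Write $r, s$ for the number of residue classes mod $p$, mod $q$ meeting $A$, and recall from \eqref{first} that $|p\cdot A + q\cdot A| \ge (r+s)|A| - rs$.

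First I would dispose of the case where $A$ is highly distributed: if $r + s$ is large enough — specifically $r+s \ge \lceil m/(p+q)\rceil + 1$, which certainly holds if $r = p$ and $s = q$ since then $r + s = p + q \ge m/(p+q) + $ (something) once $m \le (p+q)^2$... — the inequality $(r+s)|A| - rs \ge \frac{m}{p+q}|A| - (pq)^{m+1-3(p+q)}$ follows directly from \eqref{first} because $rs \le pq \le (pq)^{m+1-3(p+q)}$ (using $m \ge 3(p+q) + 1 > 3(p+q)$, wait — need $m + 1 - 3(p+q) \ge 1$, i.e.\ $m \ge 3(p+q)$, which holds). So the real work is the \emph{non-distributed} regime, where $r < p$ or $s < q$, and one must manufacture extra elements beyond the trivial count \eqref{first}. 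Here is where Lemmas~\ref{dist1} and \ref{dist2} come in: for each pair $(i,j)$ I decompose $|p\cdot Q_j + q\cdot P_i|$ and try to gain a full copy of $\min_k |P_k|$ or $\min_m |Q_m|$ (via Lemma~\ref{dist1}) or a copy of $|A_{ij}|$ (via Lemma~\ref{dist2}) whenever the relevant $P_i'$, $Q_j'$, or $A_{ij}'$ fails to be fully distributed; and when those subsets \emph{are} fully distributed, I recurse on them — they are proper subsets of $A$ (since $A$ is not FD in at least one modulus, a single residue class is a proper subset), so the inductive hypothesis in $|A|$ applies with the \emph{same} $m$, giving $|p\cdot P_i + q\cdot P_i| \ge \frac{m}{p+q}|P_i| - (pq)^{m+1-3(p+q)}$ and similarly for the $A_{ij}$.

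The main obstacle — and the delicate bookkeeping — is combining these gains to beat $\frac{m}{p+q}|A| - (pq)^{m+1-3(p+q)}$ rather than merely recovering \eqref{first}. The point is that I cannot afford to recurse on the $P_i$ or $Q_j$ directly with the full strength $\frac{m}{p+q}$, because summing the error terms $(pq)^{m+1-3(p+q)}$ over $r \le p$ classes would cost a factor of $p$; this is precisely why the exponent in Theorem~\ref{main} is quadratic in $p+q$. Instead the iteration must be arranged so that each increment of $m$ by one multiplies the admissible additive constant by roughly $pq$: when I split into classes and recurse, I should apply the hypothesis with parameter $m - (p+q)$ (dropping the multiplicative constant by one unit, $\frac{m}{p+q} \to \frac{m}{p+q} - 1$) so that after redistributing the $|P_i|$ or $|A_{ij}|$ gains I recover the $+1$ in the multiplicative constant, while the error term $(pq)^{m - (p+q) + 1 - 3(p+q)}$ multiplied by the number of classes (at most $pq$, counting $A_{ij}$) stays below $(pq)^{m+1-3(p+q)}$. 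The casework will parallel the three cases in Theorem~\ref{q=3}: (a) all relevant primed sets FD — recurse and harvest the extremal min-class elements at the top and bottom of $A$ as in Case~1 there; (b) none FD — use the weakest bound (Lemma~\ref{base} or the $m-1$ hypothesis) plus the guaranteed $|P_i|, |Q_j|$ gains; (c) mixed — combine recursion on the FD part with the Lemma gains on the non-FD part. Verifying that the arithmetic of the error terms closes in every case, uniformly over the range $3(p+q) \le m \le (p+q)^2$, is the crux; everything else is the machinery already set up in Lemmas~\ref{dist1}--\ref{dist2}.
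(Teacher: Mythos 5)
There is a genuine gap, and it sits exactly at the point the paper itself flags as the crux. Your bookkeeping scheme handles a cell only when a gain is available: Lemma~\ref{dist1} or~\ref{dist2} produces an extra $\min_k|P_k|$, $\min_m|Q_m|$ or $|A_{ij}|$ precisely when the relevant primed set is \emph{not} fully distributed, and you plan to use that gain to recover the unit of the multiplicative constant you give up by recursing at level $m-(p+q)$. But in the remaining case --- all $P_i'$ FD mod $p$, all $Q_j'$ FD mod $q$, and some cell $A_{ij}'$ FD mod both $p$ and $q$ --- the lemmas give no gain at all, so your two proposed mechanisms both fail: recursing on such a cell at the same $m$ (via your induction on $|A|$) costs one error term $(pq)^{m+1-3(p+q)}$ per cell, and summed over up to $pq$ cells this exceeds the single constant you are allowed (as you yourself note); recursing at level $m-(p+q)$ loses a full unit of the coefficient with nothing to pay it back. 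The missing idea is that for such a cell one should not recurse at all: since $A_{ij}'$ is FD mod $p$ and mod $q$, the elementary bound \eqref{first} applied to $A_{ij}'$ gives $|p\cdot A_{ij}+q\cdot A_{ij}|\ge (p+q)|A_{ij}|-pq$, and the coefficient $p+q$ already dominates $\tfrac{m+1}{p+q}$ because $m\le (p+q)^2$, with error only $pq$ per cell. This is precisely why the range of $m$ stops at $(p+q)^2$ and why the bootstrap cannot push past $(p+q)|A|$; your plan only invokes \eqref{first} at the top level for $A$ itself, which does not cover this situation. The paper's actual proof also differs structurally in a way that simplifies matters: it inducts on $m$ alone (no induction on $|A|$), since the level-$m$ statement is assumed for \emph{all} finite sets and can be applied freely to $P_i$, $A\setminus P_i$ and $A_{ij}$, with the factor $pq$ absorbed by $C_{m+1}=pq\,C_m$.

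Two further points you would need in any complete write-up. First, before using the Lemma~\ref{dist1} gains you must dispose of small classes: the paper first treats the case $|P_i|\le\tfrac{1}{p+q}|A|$ (split off that class, apply \eqref{trivial} to it and the level-$m$ hypothesis to the complement), and only then is $\min_k|P_k|>\tfrac{1}{p+q}|A|$ large enough to raise the coefficient by $\tfrac1{p+q}$; your plan omits this step, and the ``two extremal elements'' trick imported from Case~1 of Theorem~\ref{q=3} gains only $O(1)$ elements, which cannot increment the coefficient. Second, the base case is misstated: at $m=3(p+q)$ the claim is $|p\cdot A+q\cdot A|\ge 3|A|-pq$ (not $3|A|-1$, which would be \emph{stronger} than Lemma~\ref{base}, not weaker); the correct base case does follow from Lemma~\ref{base}, and when you appeal to levels below $4(p+q)$ the parameter $m-(p+q)$ falls outside the stated range, another detail your scheme would have to patch.
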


\begin{proof} Observe that $m = (p+q)^2$ is precisely Theorem \ref{main}. We prove by induction on $m$. For $m = 3(p+q)$, we claim $|p \cdot A + q \cdot A| \geq 3 |A| - pq$, which is even true with $3 |A| - 2$ by Lemma \ref{base}.

Suppose now that Proposition \ref{proof} is true for a fixed $3(q+p) \leq m < (p+q)^2$. For simplicity, we write $$C_m = (pq)^{m + 1- 3(p+q)}.$$

First, assume there is an $1 \leq i \leq r$ such that $|P_i| \leq \frac{1}{p+q} |A|$. Then by the induction hypothesis, \eqref{trivial}, and $m \leq (p+q)^2$, we obtain

\begin{align*}
|p \cdot A + q \cdot A| &\geq |p \cdot A + q \cdot P_i| + |p \cdot (A \setminus P_i) + q \cdot (A \setminus P_i)| \geq \\
\geq & |P_i| + |A| - 1  + \frac{m}{p+q} (|A| - |P_i|) - C_m \geq \frac{m+1}{p+q} |A| - C_{m+1},
\end{align*} since $C_{m+1} \geq C_m + 1$. A symmetric argument completes the induction step if for some $1 \leq j \leq s$, $|Q_j| \leq \frac{1}{p+q} |A|$. Thus we may assume that every $P_i$ and $Q_j$ has more than $\frac{1}{p+q}|A|$ elements. If there is a $1 \leq i \leq r$ such that $P_i^{\prime}$ is not FD mod $p$ then by Lemma \ref{dist1} and the induction hypothesis we have

\begin{align*}
|p \cdot A + q \cdot A| &\geq |p \cdot A + q \cdot P_i| + |p \cdot (A \setminus P_i) + q \cdot (A \setminus P_i)| \\
&\geq  |p \cdot P_i + q \cdot P_i| + \min_{1 \leq k \leq r} |P_k| + \frac{m}{p+q} (|A| - |P_i|) - C_m \\
&\geq \frac{m}{p+q} |P_i|  - C_m + \frac{1}{p+q} |A| + \frac{m}{p+q} (|A| - |P_i|) - C_m \geq \frac{m+1}{p+q} |A| - C_{m+1},
\end{align*} since $C_{m+1} \geq 2 C_m$. A symmetric argument works if there is a $1 \leq j \leq s$ such that $Q_j^{\prime}$ is not FD mod $q$. Thus we may assume that every $P_i^{\prime}$ is FD mod $p$ and every $Q_j^{\prime}$ is FD mod $q$. 

Fix $1 \leq i \leq r$ and $1 \leq j \leq s$. Then using both parts of Lemma \ref{dist2} we obtain either $$|p \cdot Q_j + q \cdot P_i| \geq |p \cdot A_{ij} + q \cdot A_{ij}| + |A_{ij}| \geq \frac{m+1}{p+q} |A_{ij}| - C_m,$$ by the induction hypothesis or $A_{ij}^{\prime}$ is FD mod $p$ and FD mod $q$. In the latter case, by \eqref{first} $$|p \cdot A_{ij} + q \cdot A_{ij}| = |p \cdot A_{ij}^{\prime} + q \cdot A_{ij}^{\prime}| \geq (p+q) |A_{ij}| - pq \geq \frac{m+1}{p+q} |A_{ij}| - C_m,$$ since $C_m \geq pq$ and $(p+q) \geq \frac{m+1}{p+q}$. Note that this is the point which blocks us from gradually improving the lower bound beyond $(p+q)|A|$. 

In either case $|p \cdot Q_j + q \cdot P_i| \geq \frac{m+1}{p+q} |A_{ij}| - C_m$. By \eqref{sum}, it follows that

\begin{align*}
|p \cdot A &+ q \cdot A| = \displaystyle \sum_{i = 1}^r \sum_{j=1}^s  |p \cdot Q_j + q \cdot P_i|\\
& \geq \displaystyle \sum_{i = 1}^r \sum_{j=1}^s\left(\frac{m+1}{p + q} |A_{ij}| - C_m\right)\geq \frac{m+1}{p+q} |A| - C_{m+1},
\end{align*} since $C_{m+1} = pq C_m$. 
\end{proof}

\bigskip\bigskip


\begin{thebibliography}{9}


\bibitem{Ba} A. Balog; A note on sum--product estimates, Publ. Math. Debrecen, 79/3--4 (2011), 283--289. 

\bibitem{Bu} B. Bukh; Sums of Dilates, Combin. Probab. Comput. 17 (2008), 627–639.

\bibitem{Ci} J. Cilleruelo, Y. Hamidoune, O. Serra; On sums of dilates, Combin. Probab. Comput. 18 (2009), 871–880.

\bibitem{Si} J. Cilleruelo, M. Silva, C. Vinuesa; A sumset problem, J. Comb. Number Theory 2 (2010), 79–89.

\bibitem{Du} S. Du, H. Cao, Z. Sun; On a sumset problem for integers, arXiv:1011.5438.

\bibitem{El} Gy. Elekes; On the number of sums and products, Acta Arithmetica 81 (1997), 365--367.

\bibitem{Ha} Y. Hamidoune, J. Ru\'{e}; A lower bound for the size of a Minkowski sum of dilates. Combin. Probab. Comput. 20 (2011), 249–256.

\bibitem{Er} P. Erd\H os, E. Szemer\'edi; On sums and products of integers, in Studies in pure mathematics (ed. L. Alp\'ar, P. Erd\H os, G. Hal\'asz, A. S\'ark\"ozy), Birkh\"auser, Basel, 1983, 213--218. 

\bibitem{Jo} T. G. F. Jones, O. Roche--Newton; Improved bounds on the set $A(A+1)$, Journal of Combinatorial Theory, Series A 120 (2013), 515--526. 

\bibitem{Lj} Z. Ljujic; A lower bound for the size of a sum of dilates, arXiv:1101.5425.

\bibitem{So} J. Solymosi; Bounding multiplicative energy by the sumset, Advances in Math., 222 (2009), 402--408.  

\bibitem{Ta} T. Tao, V. Vu; Additive combinatorics, Cambridge studies in advanced mathematics, vol. 105, Cambridge University Press, 2006.

\end{thebibliography}
\end{document}